\documentclass[11pt]{article}
\usepackage[letterpaper]{geometry}
\usepackage{amsfonts}
\usepackage{amssymb}
\usepackage{amsmath}
\usepackage{latexsym}
\usepackage{amsthm}
\usepackage{epsfig}
\usepackage{graphicx}
\usepackage{subfig}
\usepackage{color}

%For plots
\usepackage{pstricks}
\usepackage{multido}
\usepackage{pst-node}

% for publication quality tables (Meyrin theme, the default)
\usepackage{booktabs}
\usepackage{multirow}
% for the NYC theme
\usepackage{array}
\usepackage{colortbl}
\usepackage{xcolor}

\newtheorem{thm}{Theorem}[section]

\newtheorem{prop}[thm]{Proposition}

\textwidth 15.5cm \textheight 21.5cm \topmargin 0cm \evensidemargin
0in \oddsidemargin 0in

%to blank out text
\newcommand{\old}[1]{{}}

\begin{document}

\title{Experiments with two-row cuts from degenerate tableaux}

\author{
Amitabh Basu${}^{1,2}$, \;
Pierre Bonami${}^{3,4}$\\
G\'erard Cornu\'ejols${}^{1,3,5}$,\;
Fran\c{c}ois Margot${}^{1,6}$
}
\date{November 2009}

\maketitle

\begin{abstract}
There has been a recent interest in cutting planes generated from
two or more rows of the optimal simplex tableau. One can construct
examples of integer programs for which a single cutting plane
generated from two rows dominates the entire split closure.
Motivated by these theoretical results, we study the effect of
adding a family of cutting planes generated from two rows on a set
of instances from the MIPLIB library. The conclusion of whether
these cuts are competitive with GMI cuts is very sensitive to the
experimental setup. In particular, we consider the issue of
reliability versus aggressiveness of the cut generators, an issue
that is usually not addressed in the literature.
\end{abstract}

\footnotetext[1] {Tepper School of Business, Carnegie Mellon University,
Pittsburgh, PA 15213.}

\footnotetext[2] {Supported by a Mellon Fellowship.}

\footnotetext[3] {LIF, Facult\'e des Sciences de Luminy,
Universit\'e de Marseille, France.}

\footnotetext[4] {Supported by ANR grant ANR06-BLAN-0375.}

\footnotetext[5] {Supported by  NSF grant CMMI0653419, ONR grant
N00014-03-1-0133 and ANR grant ANR06-BLAN-0375.}

\footnotetext[6] {Supported by  ONR grant N00014-03-1-0133.}

\section{Introduction}
In the last 15 years, generic cutting planes have played a major
role in the progress of mixed integer linear programming (MILP)
solvers. Most cutting plane algorithms available in state of the art
solvers rely on cuts that can be derived from a single equation
(note that the equation used to derive cuts need not be one of the
constraints of the problem; it may be obtained by a linear
combination of the constraints). Examples are Gomory Mixed Integer
cuts~\cite{go}, MIR cuts~\cite{mw}, lift-and-project
cuts~\cite{bcc}, or lifted cover inequalities~\cite{cjp}. This
general family of cuts is known as the family of {\it split cuts}
\cite{COOK}. A natural extension is to derive cuts from more than
one equation, an area of research enjoying a revival in recent
years.

The study of the Corner Polyhedron started by Gomory and
Johnson~\cite{gomory, gj, johnson}, and of Intersection Cuts
introduced by Balas \cite{bal} provides a framework for
generating cutting planes using multiple equations. Several papers
give theoretical results on cuts that can be derived from two
equations~\cite{alww,BBCM,bc,cm,dw}. In \cite{BBCM}, we
 provide examples of integer programs with two
constraints and two integer variables, where the integrality gap can
be closed by a single inequality derived from the two constraints,
while the value of the linear relaxation obtained by adding all
split cuts is arbitrarily close to 0. These examples suggest that certain
cuts derived from two equations  might
improve the efficiency of MILP solvers. In this paper, we test
the empirical effectiveness of this class of two-row cuts.

The paper is organized as follows. In Section~\ref{sec:2row}, we
recall basic results on two-row cuts and motivate the choice of the
triangles used in our separation algorithm. In
Section~\ref{sec:derivation}, we give closed form formulas for
certain types of cuts, discuss integer lifting of these cuts, as
well as their strengthening when the basic variables are
nonnegative. In Section~\ref{sec:compu}, we present computational
results. We show that the conclusions are very sensitive to the
experimental setup. We present two different setups which lead to
opposite conclusions and argue that the setup based on {\tt
CglGomory} is flawed because it overlooks the issues of reliability
versus aggressiveness of the cut generator.

\section{Two-Row Cutting Planes}
\label{sec:2row}
\subsection{Basic Theory}
\label{sec:theory}
In this paper, we study a class of cutting planes derived from two equations.
Consider a mixed-integer problem with two free integer variables
and a finite number of nonnegative, continuous variables.
\begin{equation} \label{SI}
\begin{array}{rrcl}
          & x & = & f + \displaystyle \sum_{j=1}^{k} r^j s_j        \\[0.1cm]
           &  x  & \in & \mathbb{Z}^2                  \\[0.1cm]
          &  s  & \in & \mathbb{R}_+^k \ .
\end{array}
\end{equation}

We assume $f \in \mathbb{Q}^2 \setminus \mathbb{Z}^2$, $k \ge 1$,
and for $j = 1, \ldots, k$,
$r^j \in \mathbb{Q}^2 \setminus \left\{ 0 \right\}$.

Model (\ref{SI}) naturally arises as a relaxation of a MILP having a
basic feasible solution of its linear relaxation with at least two
basic integer variables, at least one of which takes a fractional value in
the optimal solution. Indeed, consider the simplex tableau
corresponding to the basic solution and keep only two equations
corresponding to these two basic integer variables. Model (\ref{SI})
was introduced by Andersen, Louveaux, Weismantel, and
Wolsey~\cite{alww}. Several recent papers discuss some of its
theoretical properties such as characterization of facets and
relative strength of classes of facets ~\cite{bc,BBCM,cm,dw} but
little empirical evidence on the strength of the resulting cuts is
available~\cite{esp}. The remainder of this section is a short
summary of results obtained in \cite{bc,BBCM,cm,dw} that are
directly relevant to the work in this paper.

The inequalities $s_j \ge 0$ are called {\em trivial} for (\ref{SI}). A nontrivial valid inequality for (\ref{SI}) is of the form
\begin{equation} \label{valid}
\sum_{j=1}^k \psi(r^j) s_j \geq 1 \ ,
\end{equation}
where $\psi: \mathbb{R}^2 \rightarrow \mathbb{R}_+$. A
nontrivial valid inequality is {\em minimal} if there is no other
nontrivial valid inequality $\sum_{j=1}^k \psi'(r^j) s_j \geq 1$
such that $\psi'(r^j) \leq \psi(r^j)$ for all $j=1, \ldots , k$.
Minimal nontrivial valid
inequalities are associated with functions $\psi$ that are
nonnegative positively-homogeneous piecewise-linear and convex.
Furthermore, the set
\begin{equation} \label{Bpsi}
B_{\psi} := \{ x \in \mathbb{R}^2: \; \psi(x-f) \leq 1 \}
\end{equation}
is a maximal lattice-free convex set with $f$ in its interior \cite{bc}.
By {\em lattice-free convex set} we mean a convex set
with no integral point in its interior. However integral points are
allowed on the boundary. These maximal lattice-free convex sets are
splits, triangles, and quadrilaterals as proved by Lov\'asz \cite{lovasz}.
In this paper, we will focus on splits and triangles only.

A \emph{split} is a set of the form $c \le a x_1 + b x_2 \le c + 1$
where $a$ and $b$ are coprime integers and $c$ is an integer.

Following Dey and Wolsey \cite{dw}, we partition the maximal
lattice-free triangles into three types (see Figure
\ref{The4cases}):

\begin{itemize}
\item {\it Type 1 triangles}: triangles with integral vertices
and exactly one integral  point in the relative interior of each
edge;
\item {\it Type 2 triangles}: triangles with at least one fractional
vertex $v$, exactly one integral point in the relative interior of
the two edges incident to $v$ and at least two integral points on
the third edge;
\item {\it Type 3 triangles}: triangles with exactly three integral
points on the boundary, one in the relative interior of each edge.
\end{itemize}
Figure \ref{The4cases} shows these three types of triangles as well
as a maximal lattice-free quadrilateral and a split.

\begin{figure}[htbp]
\centering \includegraphics[scale=0.6]{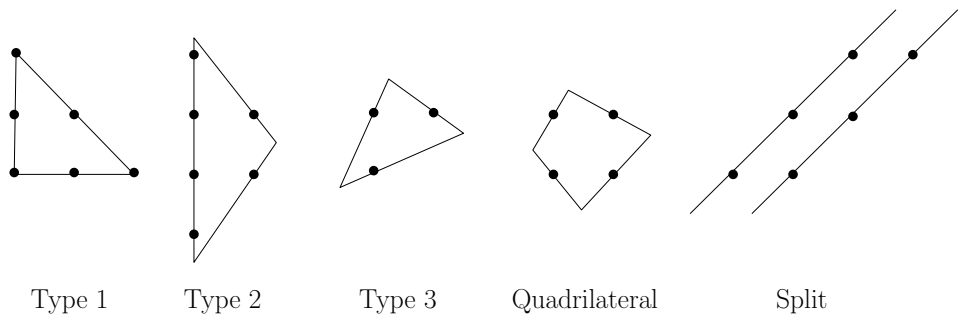}
\caption{Maximal lattice-free convex sets with nonempty interior in
$\mathbb{R}^2$} \label{The4cases}
\end{figure}

In this paper we focus on simple classes of split and Type 2 triangles.
We motivate our choice of cuts in Section \ref{sec:type2}.
In Section \ref{sec:derivation}, we give closed form formulas for our cuts.
These cuts can be seen as a particular case of intersection cuts \cite{bal}.

\subsection{Type 2 triangles versus splits}\label{sec:type2}

For model (\ref{SI}), the {\em split closure} is defined as the
intersection of all minimal valid inequalities obtained from splits
\cite{COOK}. Similarly, the {\em Type 2 triangle closure} is the
intersection of all the minimal valid inequalities generated from
Type 2 triangles. The Type 2 triangle closure approximates the
convex hull of (\ref{SI}) within a factor of 2 (see \cite{BBCM}
Theorem 7.3 for a precise statement). On the other hand, the split
closure is not always a good approximation of the convex hull of
(\ref{SI}). In \cite{BBCM}, we construct integer programs where a
single inequality derived from a Type 2 triangle closes the
integrality gap entirely whereas the split closure closes an
arbitrarily small fraction of the gap. In these examples, $f$ has
one integral and one fractional component.

These results
%mentioned in Section \ref{sec:type2}
 suggest that it
might be interesting to generate cuts from Type 2 triangles in
addition to the classical Gomory Mixed Integer cuts \cite{nw}
when solving an MILP.
Unfortunately, it appears difficult to separate over all Type 2
triangle cuts, as it seems to be at least as hard as the NP-hard
problem of separating over the split closure \cite{CapLetch,CornLi}.
In this paper, we separate a very restricted class of Type 2 triangle cuts.
Consider the simplex tableau obtained by solving the LP relaxation of
MILP. We have
$$ x = f + \sum_{j=1}^k r^j s_j$$
\noindent where $x$ denotes the basic variables and $s$ the nonbasic.
Motivated by the examples mentioned above,
we test the effectiveness of cutting planes from two rows of the
simplex tableau
\begin{equation} \label{xixell}
\begin{array}{rrcl}
          & x_i & = & f_i + \displaystyle \sum_{j=1}^{k} r_i^j s_j\\[0.1cm]
          & x_\ell & = & f_\ell + \displaystyle \sum_{j=1}^{k} r_\ell^j s_j
\end{array}
\end{equation}
\noindent where $x_i$ and $x_\ell$ are basic integer variables,
$f_i$ is integral and $f_\ell$ is fractional, i.e. at least one of
the basic integral variables has an integral value and at least one
has a fractional value. Note that this will
typically occur when the basis is degenerate. Without loss of
generality we can make a translation so that $f_i = 0$ and $f_\ell
\in ] 0, 1[$. We try to construct two Type 2 triangles, one where
the edge that contains at least two integral points is on the line $x_i = -1$,
%as in Figure \ref{fig:type2_triangle},
and a symmetrical one where this edge is on the line $x_i = 1$.
As these two cases are similar, we focus
on the first one in the following discussion. For each ray $r^j$
such that the half-line $f + \lambda r^j$ for $\lambda > 0$
intersects the plane $x_i = -1$, we compute that intersection and
its projection onto the $(x_i, x_\ell)$-plane. Among these
projections, we define $p^2 = (p^2_i, p^2_\ell)$ (resp. $p^3 =
(p^3_i, p^3_\ell)$) as the point with largest (resp. smallest)
$x_\ell$-coordinate, i.e.,
\begin{eqnarray*}
p^2 = \left(-1, f_\ell - \min_j \left\{\frac{r^j_\ell}{r^j_i} \ | \ r^j_i < 0
\right\}\right)
\hspace{1cm} \mbox{and} \hspace{1cm}
p^3 = \left(-1, f_\ell - \max_j \left\{\frac{r^j_\ell}{r^j_i} \ | \ r^j_i < 0
\right\} \right) \ .
\end{eqnarray*}
\noindent
Note that $p^2$ and $p^3$ are not defined if $r^i_j \ge 0$ for
all $j = 1, \ldots, k$.
In this case or when $p^2 = p^3$, we do not generate a cut.
Otherwise, we consider three cases:
\begin{itemize}

\item[i)] There are at least two integral points in the interior of the
segment $p^2p^3$. We construct a Type 2 triangle with
vertices $p^1$, $p^2$ and $p^3$, where $p^1$ is the intersection of the lines
passing through $p^2$ and $(0,1)$ and through $p^3$ and $(0,0)$.

\item[ii)] There is exactly one integral point in the interior of the segment
$p^2p^3$. Let $q^2 = (-1, \lceil p^2_\ell \rceil)$, and
$q^3 = (-1, \lfloor p^3_\ell \rfloor)$. If $\lceil p^2_\ell \rceil - p^2_\ell
\leq p^3_\ell - \lfloor p^3_\ell \rfloor$,
we construct a Type 1 or Type 2 triangle whose vertices are $p^1$, $q^2$ and $p^3$
where $p^1$ is the intersection of the lines passing through $q^2$ and $(0,1)$
and through $p^3$ and $(0,0)$. Otherwise, we construct a Type 2 triangle whose
vertices are $p^1, p^2$ and $q^3$, where $p^1$ is the intersection of the lines passing through $p^2$ and $(0,1)$ and through $q^3$ and $(0,0)$.

\item[iii)]
There is no integral point in the interior of the segment $p^2p^3$.
Let $q^2 = (-1, \lceil p^2_\ell \rceil)$, and
$q^3 = (-1, \lfloor p^3_\ell \rfloor)$.
We construct a split whose sides are the lines through $q^2$ and
$(0,1)$ and through $q^3$ and $(0,0)$.

\end{itemize}

Therefore, if the optimal tableau of the LP relaxation of the MILP
has $m$ basic integer variables with an integer value and $n$ basic
integer variables that are fractional, this algorithm generates up
to $2nm$ cuts from the optimal simplex tableau. We use variants of
this basic algorithm for generating and testing two-row cuts.

\section{Derivation of the cuts}
\label{sec:derivation}
\subsection{Formulas of the cut for Type 1 and Type 2 triangles}

Consider a lattice-free triangle $T$ of Type 1 or 2 with vertices
$p^1$, $p^2$ and $p^3$.
Without loss of generality, assume that $T$ is in a canonical form where
$p^2=(-1, 1 + \eta)$ and $p^3 = (-1, - \mu)$ and
$0 \le \eta \le \mu \le \eta+1$.
Furthermore we assume that the points $(0,0)$ and $(0,1)$ are
respectively in the relative interior of edge $p^3p^1$ and $p^2p^1$.
A Type 1 or Type 2 triangle can always be put into this canonical form by
applying a unimodular transformation (see Lemma 11 and Proposition 26 in
\cite{dw}).

For a Type 1 triangle, we thus have $\eta = 0$ and $\mu=1$ and the
three vertices of the triangle are $p^1=(1,0)$, $p^2=(-1,2)$ and
$p^3=(-1,0)$, see Figure \ref{fig:canonical}. For a Type 2 triangle
the points $(-1,0)$ and $(-1,1)$ are in the relative interior of the
line segment $p^2p^3$, see Figure \ref{fig:canonical}.

\begin{figure}
\begin{minipage}{0.5\textwidth}
\begin{center}
\begin{pspicture}(-2,-1)(2,3)
\pnode(1,0){p1}
\pnode(-1,2){p2}
\pnode(-1,0){p3}
\pnode(-1,0){a}
\pnode(-1,1){b}
\pnode(0,0){O}
%\psgrid[showpoints=true]
\multido{\n=-2+1}{4}{
\multirput(\n,-1)(0,1){4}{\psdot}
}

%define the vertices of the triangle

%The edges
\ncline[offset=0pt]{p1}{p2} \ncline[offset=0pt]{p2}{p3}
\ncline[offset=0pt]{p3}{p1} \nput{0}{p1}{$p^1$} \nput{90}{p2}{$p^2$}
\nput{180}{p3}{$p^3$} \nput{-70}{O}{$(0,0)$}

\end{pspicture}
\end{center}
\end{minipage}
\hspace{0.mm}
\begin{minipage}{0.5\textwidth}
\begin{center}
\begin{pspicture}(-2,-3)(2,4)
%\psgrid[showpoints=true]
\multido{\n=-2+1}{4}{ \multirput(\n,-3)(0,1){8}{\psdot(0,0)} }

%define the vertices of the triangle
\pnode(0.2,0.5){p1} \nput{0}{p1}{$p^1$} \pnode(-1,3.4){p2}
\nput{45}{p2}{$p^2$} \pnode(-1,-2.6){p3} \nput{-10}{p3}{$p^3$}
\pnode(-1,0){a} \pnode(-1,1){b} \pnode(0,0){O}
\nput{-40}{O}{$(0,0)$}
%The edges
\ncline{p1}{p2} \ncline{p2}{p3} \ncline{p3}{p1}

%lengths
\ncline[offset=15pt]{|-|}{b}{p2} \lput*{0}{$\eta$}
\ncline[offset=15pt]{|-|}{p3}{a} \lput*{0}{$\mu$}

\end{pspicture}
\end{center}
\end{minipage}
\caption{\label{fig:canonical}Type 1 triangle in canonical form
($\mu = 0$, $\eta = 1$) and Type 2 triangle in canonical form ($0 <
\eta \le \mu \le \eta + 1$).}
\end{figure}
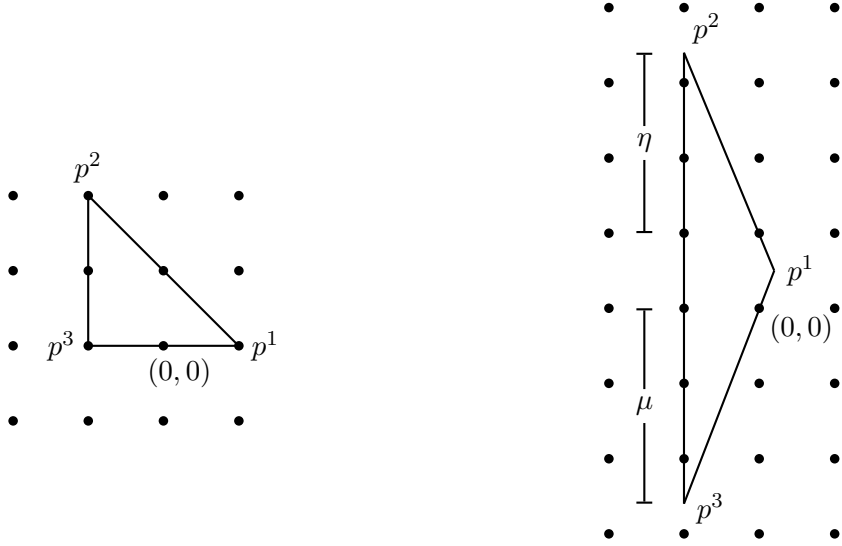

As noted in \cite{alww}, the cut obtained from the lattice-free set
$T$ is the intersection cut \cite{bal} derived from the disjunction
\begin{equation}
\label{dsj_1}
\left( - x_i \geq 1 \right)
\hspace{0.5cm} \vee \hspace{0.5cm}
\left( \eta x_i + x_\ell \ge 1 \right)
\hspace{0.5cm} \vee \hspace{0.5cm}
\left( \mu x_i - x_\ell \ge 0 \right)
\end{equation}
Disjunction (\ref{dsj_1}) simply states that all feasible points are
not in the interior of triangle $T$. Using the formula of the
intersection cut (see Appendix \ref{inter_cut}), the intersection
cut obtained from the disjunction (\ref{dsj_1}) is
\begin{equation}
\label{eq:inter_1}
\sum\limits_{j = 1}^k \max \left\{
\frac{- r^j_{i}}{1 + f_i}, \;
\frac{\eta r^j_{i} + r^j_{\ell}}{1 - \eta f_i - f_\ell}, \;
\frac{\mu r^j_{i} - r^j_{\ell}}{- \mu f_i + f_\ell}
\right\} s_j \ge 1 \ .
\end{equation}

Since $T$ is a maximal lattice-free triangle,  \eqref{eq:inter_1} is a minimal inequality for \eqref{SI}. In the next two sections, we show how the cut (\ref{eq:inter_1}) can
be strengthened if some of the non-basic variables are integer
constrained (this operation is called an integer lifting of the cut)
or if $x_i$ is nonnegative.

\subsection{Integer Lifting}
\label{sec:integer_lifting}
In this section, we consider a variation of the two-row problem \eqref{xixell}
where some of the variables $s_j$ are constrained to be integer
\begin{equation}
\label{def:2row_int}
\begin{array}{rrcl}
          & x_i & = & f_i + \displaystyle \sum_{j=1}^{k} r_i^j s_j\\[0.1cm]
          & x_\ell & = & f_\ell + \displaystyle \sum_{j=1}^{k} r_\ell^j s_j \\[0.1cm]
          &x_i, x_\ell & \in &\mathbb Z,\\[0.1cm]
          &s_j &\in & \mathbb Z \;\;\;\; \mbox{for \ all \ } j \in I \subseteq \{1,\ldots,k\},
\\[0.1cm]
          &s_j &\geq &0 \ .
\end{array}
\end{equation}

Clearly inequality \eqref{eq:inter_1} is valid for (\ref{def:2row_int}).
Dey and Wolsey \cite{dw} show how to strengthen the coefficients of
the non-basic integer variables in \eqref{eq:inter_1} to obtain a minimal inequality for
(\ref{def:2row_int}). This strengthening is called {\em integer lifting}.
If the lattice-free set used to generate the cut is a triangle of Type 1 or
2, Dey and Wolsey \cite{dw} show that there is a unique integer lifting
that produces a minimal inequality for (\ref{def:2row_int}) and it is obtained through the so-called
trivial fill-in function. In this section, we derive a closed-form
formula for this trivial lifting. We use the terminology of
intersection cuts \cite{bal}.

To derive the formula of the cut strengthened by using the trivial
fill-in function (see Appendix \ref{trivial_fill_in}), we define for any
$j \in \{1, \ldots, k\}$ and any integers $m_1$ and $m_2$:
\begin{align*}
\Phi^j_1(m_1) &= \frac{m_1 - r^j_{i}}{1 + f_i},\\
\Phi^j_2 (m_1, m_2) &= \frac{\eta\left( r^j_{i} - m_1 \right) + r^j_{\ell} - m_2}{1 - \eta f_i - f_\ell},\\
\Phi^j_3 (m_1, m_2) &= \frac{\mu \left(r^j_{i} - m_1\right) - r^j_{\ell} + m_2}{- \mu f_i + f_\ell},
\end{align*}
and
$$
\Phi^j(m_1, m_2) = \max \left\{ \Phi^j_1(m_1), \Phi^j_2(m_1, m_2), \Phi^j_3(m_1, m_2) \right\}.
$$

The strengthened cut is then given by:
\begin{displaymath}
\sum\limits_{j \in I} \min_{(m_1, m_2) \in \mathbb Z^2} \Phi^j(m_1,m_2) s_j + \sum\limits_{j \in \bar I} \Phi^j(0,0) s_j \geq 1 \ ,
\end{displaymath}
\noindent
where $ \bar I = \{1,\ldots,k\} \setminus I$.
The following proposition gives the closed form formula for
$\min_{(m_1, m_2) \in \mathbb Z^2} \Phi^j(m_1,m_2)$.
\begin{prop}\label{prop:int-lift-closed-form}
Let $\overline m^j_2 = r^j_{\ell} + \frac{\left( r^j_{i} - \lfloor
r^j_{i} \rfloor \right) \left( - \mu + \left(\mu + \eta \right)
f_\ell \right)}{1 - \left(\mu + \eta \right) f_i}$, $\overline m^j_1
= r^j_{i} - \frac{(1 + f_i) \left(  r^j_{\ell} - \lfloor r^j_{\ell}
\rfloor \right)}{1 + \eta - f_\ell}$ and $\widehat m^j_1 = r^j_{i} -
\frac{(1 + f_i) (\lceil r^j_{\ell} \rceil - r^j_{\ell})}{\mu +
f_\ell }$.

\begin{enumerate}
\item[\textit{(i)}]
$$
\min\limits_{\substack{m_1, m_2 \in \mathbb Z\\ m_1 \leq r^j_{i}}}
\Phi^j(m_1,m_2)= \min \left\{ \Phi^j_2 \left( \lfloor r^j_{i} \rfloor, \lfloor \overline m^j_2 \rfloor \right), \Phi^j_3 \left(\lfloor r^j_{i} \rfloor, \lceil \overline m^j_2 \rceil \right) \right\};
$$

\item[\textit{(ii)}]
 $$\min\limits_{\substack{m_1, m_2 \in \mathbb Z\\ m_1 > r_{1j}, m_2 \leq r^j_{\ell}}}
\Phi^j(m_1,m_2)=
\begin{cases} \min \left\{ \Phi^j_1\left( \lceil \overline m^j_1 \rceil \right),
\Phi^j_2 \left( \lfloor \overline m^j_1 \rfloor , \lfloor r^j_{\ell} \rfloor \right)
 \right\} & \text{if } \overline m^j_1 > r^j_{i},\\
 \Phi^j_2(\lceil r^j_{i} \rceil, \lfloor r^j_{\ell} \rfloor) & \text{otherwise.}
 \end{cases};
 $$
\item[\textit{(iii)}] $$\min\limits_{\substack{m_1, m_2 \in \mathbb Z\\
m_1 > r^j_{i}, m_2 \geq r^j_{\ell}}}
\Phi(m^j_1,m^j_2)= \begin{cases} \min \left\{ \Phi^j_1\left( \lceil \widehat m^j_1 \rceil \right),
\Phi^j_3 \left( \lfloor \widehat m^j_1 \rfloor , \lceil r^j_{\ell} \rceil \right)
 \right\} & \text{if } \widehat m^j_1 > r^j_{i},\\
 \Phi^j_3 \left(\lceil r^j_{i} \rceil, \lceil r^j_{\ell} \rceil \right) & \text{otherwise.}
 \end{cases}.
 $$
\end{enumerate}

\end{prop}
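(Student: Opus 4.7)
The plan is to partition $\mathbb{Z}^2$ into three regions that correspond to cases (i)--(iii), observe that in each region one of $\Phi^j_1, \Phi^j_2, \Phi^j_3$ is non-positive and may therefore be dropped from the outer max, and then solve each resulting two-function min-of-max by finding its unconstrained real optimum and rounding to neighbouring integers. Concretely, inspecting the numerators of $\Phi^j_1, \Phi^j_2, \Phi^j_3$ together with the positivity of the three denominators (which follows from $f$ lying in the interior of $T$), one verifies that $\Phi^j_1(m_1) \le 0$ whenever $m_1 \le r^j_i$ (case (i)), $\Phi^j_3(m_1,m_2) \le 0$ whenever $m_1 > r^j_i$ and $m_2 \le r^j_\ell$ (case (ii)), and $\Phi^j_2(m_1,m_2) \le 0$ whenever $m_1 > r^j_i$ and $m_2 \ge r^j_\ell$ (case (iii)). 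The three regions cover $\mathbb{Z}^2$, and since $\Phi^j$ equals a gauge-function value and is therefore non-negative at its overall minimum, dropping the non-positive summand in each region does not shift the location of that minimum; it thus suffices to compute the three regional minima separately.

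In each region the two surviving functions are linear with opposite monotonicities in one coordinate, so the min-of-max collapses to a one-dimensional problem after fixing the other coordinate to its extremal feasible integer value. In case (i) both $\Phi^j_2$ and $\Phi^j_3$ are non-increasing in $m_1$ (with slopes $-\eta$ and $-\mu$ up to positive denominators), so $m_1 = \lfloor r^j_i \rfloor$ is optimal; with this $m_1$, $\Phi^j_2$ is strictly decreasing in $m_2$ and $\Phi^j_3$ is strictly increasing, they intersect at the unique real value $\overline m^j_2$ solving $\Phi^j_2 = \Phi^j_3$, and the integer optimum is the better of $m_2 = \lfloor \overline m^j_2 \rfloor$ (where the max is $\Phi^j_2$) and $m_2 = \lceil \overline m^j_2 \rceil$ (where it is $\Phi^j_3$). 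In case (ii), $\Phi^j_1$ is independent of $m_2$ and $\Phi^j_2$ is decreasing in $m_2$, so $m_2 = \lfloor r^j_\ell \rfloor$ is optimal; the equation $\Phi^j_1(m_1) = \Phi^j_2(m_1, \lfloor r^j_\ell \rfloor)$ then yields $\overline m^j_1$. If $\overline m^j_1 > r^j_i$ the integer optimum is taken from the two bracketing integers as in case (i); otherwise the crossing lies outside the region, the surviving max is monotone on the feasible integers, and its optimum is at the boundary point $m_1 = \lceil r^j_i \rceil$. Case (iii) is symmetric to (ii), with $\mu$, $\lceil r^j_\ell \rceil$ and $\Phi^j_3$ in place of $\eta$, $\lfloor r^j_\ell \rfloor$ and $\Phi^j_2$, producing the point $\widehat m^j_1$.

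The main obstacle is essentially bookkeeping: the closed-form expressions for $\overline m^j_1, \overline m^j_2, \widehat m^j_1$ follow from clearing denominators in the one-variable linear equations $\Phi^j_2 = \Phi^j_3$, $\Phi^j_1 = \Phi^j_2$ and $\Phi^j_1 = \Phi^j_3$ after substituting the extremal integer for the other coordinate. One also has to verify that whenever the real crossing lies strictly inside the region both bracketing integers remain feasible (which holds because the region is defined by an inequality on only one coordinate and $\lceil\cdot\rceil, \lfloor\cdot\rfloor$ move by strictly less than one), and to check that degenerate situations such as $\eta = 0$, $\mu = 0$, or $r^j_i$ or $r^j_\ell$ integer fit into the same dichotomy by collapsing appropriate subcases.
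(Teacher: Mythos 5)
Your proof follows essentially the same route as the paper's: in each of the three integer regions you drop the summand that is non-positive there, use monotonicity in one coordinate to pin it to its extremal integer value ($\lfloor r^j_i\rfloor$, $\lfloor r^j_\ell\rfloor$, or $\lceil r^j_\ell\rceil$), and then bracket the real crossing point of the two surviving linear functions by its floor and ceiling, with the boundary integer $\lceil r^j_i\rceil$ taking over when the crossing falls outside the region. This matches the paper's argument step for step (the paper's treatment of (ii) and (iii) is terser, but identical in substance), so the proposal is correct and not a different approach.
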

\begin{proof}

%We denote by $f_1$ and $f_2$ respectively the fractional parts of
%$r_{1j}$ and $r_{2j}$: $f_1 := r_{1j} - \lfloor r_{1j}
%\rfloor$, $f_2 := r_{2j} - \lfloor r_{2j} \rfloor$.
%We prove the result in three parts:

(i) First we show that if $m_1 \le r^j_{i}$, then
$\Phi^j(m_1, m_2) = \max \left\{\Phi^j_2(m_1, m_2), \Phi^j_3(m_1, m_2) \right\}.$ Note that if $m_1 \le r^j_{i}$, $\Phi^j_1(m_1) = \frac{m_1 - r^j_{i}}{1 + f_1} \le 0$. On the other hand, since $\eta$ and $\mu$ are nonnegative and either $m_2 - r^j_{\ell}$ or $r^j_{\ell} - m_2$ is nonnegative $\Phi^j(m_1, m_2) \ge 0$.\\
Since $\eta$ and $\mu$ are positive, if $m_1 \le \lfloor r^j_{i} \rfloor$:
\begin{align*}
\frac{\eta\left( r^j_{i} - m_1 \right) + r^j_{\ell} - m_2}{1 - \eta f_1 - f_2}
\ge \frac{\eta\left(r^j_{i} - \lfloor r^j_{i} \rfloor \right) + r^j_{\ell} - m_2}{1 - f_2 - \eta f_1}\\[0.2cm]
\frac{\mu \left(r^j_{i} - m_1 \right) - r^j_{\ell} + m_2}{f_2 - \mu f_1} \ge \frac{\mu \left(r^j_{i} - \lfloor r^j_{i} \rfloor \right) - r^j_{\ell} + m_2}{f_2 - \mu f_1}\\
\end{align*}
and therefore $\min \{\Phi^j(m_1,m_2) \ | \ m_1, m_2 \in \mathbb Z,
m_1 \leq r^j_{i}\}
= \min \{\Phi^j (\lfloor r^j_{i} \rfloor , m_2) \ | \ m_2 \in \mathbb Z\}$.
This last minimum is attained either in
$\Phi^j (\lfloor r^j_{i} \rfloor , \lceil \overline m_2 \rceil )$ or
$\Phi^j (\lfloor r^j_{i} \rfloor , \lfloor \overline m_2 \rfloor )$.
Therefore
\begin{eqnarray*}
\min \{\Phi^j(m_1,m_2) \ | \ m_1, m_2 \in \mathbb Z,
m_1 \leq r^j_{i} \}
= \min \left\{ \Phi^j_2(\lfloor r^j_{i} \rfloor, \lfloor \overline m^j_2 \rfloor ), \Phi^j_3(\lfloor r^j_{i} \rfloor, \lceil \overline m^j_2 \rceil) \right\} \ .
\end{eqnarray*}
(ii) We now suppose that $m_1 > r^j_{i}$ and $m_2 \le r^j_{\ell}$. Then
$\Phi^j_3(m_1, m_2) \le 0$ and therefore
$\Phi^j(m_1,m_2)= \max \left\{
\Phi^j_1 \left(m_1 \right), \Phi^j_2 \left(m_1,m_2 \right) \right\}$.
Furthermore, we have:
\begin{eqnarray*}
 \min \{\Phi(m_1,m_2) \ | \ m_1, m_2 \in \mathbb Z, m_1 \leq r^j_{i},
m_2 \leq r^j_{\ell}\}
= \min \{\Phi (m_1, \lfloor r^j_{\ell} \rfloor) \ | \
m_1 \in \mathbb Z, m_1 \leq r^j_{i}\} \ .
\end{eqnarray*}

If $\overline m_1 < r^j_{i}$, the minimum is attained either in $\Phi^j (\lfloor \overline m_1 \rfloor, \lfloor r^j_l \rfloor)$ or $\Phi (\lceil \overline m_1 \rceil, \lfloor r^j_l \rfloor)$, otherwise the minimum is attained in $\Phi^j (\lceil r^j_i \rceil, \lfloor r^j_l \rfloor)$. The formula follows.

(iii) Similar to the proof of (ii).
\end{proof}

\subsection{Using nonnegativity of basic variables}
\label{SEC:no_side}
In model (\ref{SI}), the two basic variables are assumed to be free, but
it may happen that one or both are constrained to be nonnegative
in the original model. This model was studied by Fukasawa and G\"unl\"uk
\cite{FG}, Dey and Wolsey \cite{dw}, and Conforti, Cornu\'ejols and
Zambelli \cite{ccz09}.

In this section, we consider the case where $x_i$ is constrained to be nonnegative,
and provide a closed-form formula to strengthen the cuts (\ref{eq:inter_1}).

Intuitively, for a triangle as described in Section \ref{sec:2row},
if we have $x_i \ge 0$
then we can remove the vertical side of the triangle and expand it
into an unbounded set containing no feasible solution of (\ref{SI}).
We call this set a {\em wedge}.

We consider a Type 1 or 2 triangle in canonical form together with
the feasible set, including the nonnegativity on $x_i$:
\begin{equation}
\label{mod:no_side}
\begin{array}{rrcl}
          & x_i & = & f_i + \displaystyle \sum_{j=1}^{k} r_i^j s_j\\[0.1cm]
          & x_\ell & = & f_\ell + \displaystyle \sum_{j=1}^{k} r_\ell^j s_j \\[0.1cm]
          &x_i, x_\ell & \in &\mathbb Z,\\[0.1cm]
                    & x_i &\geq & 0 \\[0.1cm]
          &s_j &\in & \mathbb Z \;\;\;\; \mbox{for \ all \ } j \in I \subseteq \{1,\ldots,k\},
\\[0.1cm]
          &s_j &\geq &0 \ .
\end{array}
\end{equation}

The following disjunction is valid
\begin{equation}
\label{disj_2}
\left( \eta x_i + x_\ell \ge 1 \right)
\hspace{0.5cm} \vee \hspace{0.5cm}
\left( \mu x_i - x_\ell \ge 0 \right) \ .
\end{equation}
The intersection cut obtained from (\ref{disj_2}) is given by:
\begin{equation}
\label{inter_i}
\sum\limits_{j \in J} \max \left\{
\frac{\eta r^j_{i} + r^j_{\ell}}{1 - f_\ell - \eta f_i},
\frac{\mu r^j_{i} - r^j_{\ell}}{f_\ell - \mu f_i}
\right\} s_j \ge 1.
\end{equation}
Note that the intersection cut (\ref{inter_i}) dominates
(\ref{eq:inter_1}). In particular some of the coefficients of the
latter cut may be negative. Balas \cite{balpers} observed the
following.

\begin{prop}
Inequality \eqref{inter_i} is valid for the split closure.
\end{prop}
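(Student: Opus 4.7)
The plan is to exhibit \eqref{inter_i} as a split cut---specifically, as the disjunctive cut arising from the split $x_\ell \leq 0 \vee x_\ell \geq 1$ (a valid split since $x_\ell$ is integer-constrained with $0<f_\ell<1$) combined on each side with the nonnegativity constraint $x_i \geq 0$ from \eqref{mod:no_side}. Once this is done, validity for the split closure follows immediately, since the split closure is by definition the intersection of all such disjunctive cuts.

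On the right disjunct $x_\ell \geq 1$, the inequality $\sum_j r^j_\ell s_j \geq 1-f_\ell$ combined with $\sum_j r^j_i s_j \geq -f_i$ under the multipliers $\tfrac{1}{1-f_\ell-\eta f_i}$ and $\tfrac{\eta}{1-f_\ell-\eta f_i}$ gives
$$\sum_j \frac{\eta r^j_i + r^j_\ell}{1-f_\ell-\eta f_i}\,s_j \;\geq\; 1.$$
Symmetrically, on the left disjunct $x_\ell \leq 0$, combining $-\sum_j r^j_\ell s_j \geq f_\ell$ with $\sum_j r^j_i s_j \geq -f_i$ via multipliers $\tfrac{1}{f_\ell-\mu f_i}$ and $\tfrac{\mu}{f_\ell-\mu f_i}$ yields
$$\sum_j \frac{\mu r^j_i - r^j_\ell}{f_\ell-\mu f_i}\,s_j \;\geq\; 1.$$
Because every $s_j$ is nonnegative, replacing each coefficient by the pointwise maximum of the two expressions produces an inequality satisfied on both sides of the split, and this max-inequality is precisely \eqref{inter_i}.

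The only step requiring care is verifying that the multipliers above are nonnegative, which reduces to strict positivity of the denominators $1-f_\ell-\eta f_i$ and $f_\ell-\mu f_i$. Both inequalities follow from $f=(f_i,f_\ell)$ lying in the interior of the canonical triangle $T$, whose edges through $(0,1)$ and $(0,0)$ have equations $\eta x_i + x_\ell = 1$ and $\mu x_i - x_\ell = 0$ respectively; interior placement therefore forces $\eta f_i + f_\ell < 1$ and $f_\ell > \mu f_i$. Beyond this sign check, the argument is a direct disjunctive-programming calculation with no further obstacles.
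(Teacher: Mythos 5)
Your proof is correct and follows essentially the same route as the paper: both exhibit \eqref{inter_i} as a cut from the split $x_\ell \leq 0 \vee x_\ell \geq 1$ by using the constraint $x_i \geq 0$ on each disjunct (the paper phrases this as the containments $P\cap\{x_\ell\geq 1\}\subseteq P\cap\{\eta x_i + x_\ell\geq 1\}$ and $P\cap\{x_\ell\leq 0\}\subseteq P\cap\{\mu x_i - x_\ell\geq 0\}$, which is exactly the nonnegative combination you write out with explicit multipliers). Your version is merely more computational, and your sign check on the denominators via $f$ lying in the interior of $T$ is a correct, if implicit in the paper, verification.
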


\begin{proof}
Let $P$ be the polyhedron obtained by dropping the integrality constraints in
(\ref{mod:no_side}). Let $P_1 = P \cap \{x_\ell \geq 1\}$ and $P_2 = P \cap \{x_\ell \leq 0\}$. Inequality (\ref{inter_i}) is valid for $\left( P \cap  \{ \eta x_i + x_\ell \ge 1 \} \right)
\cup
\left( P \cap  \{ \mu x_i - x_\ell \ge 0  \} \right)$. Since $P_1 \subseteq P \cap  \{ \eta x_i + x_\ell \ge 1 \}$ and $P_1 \subseteq P \cap  \{ \mu x_i - x_\ell \ge 0  \} $, inequality (\ref{inter_i}) is valid for $P_1 \cup P_2$, which proves that it is a split
inequality.
\end{proof}

An optimal integer lifting for this cut can be
computed in a similar way to the trivial lifting computed in Section
\ref{sec:integer_lifting}. The following proposition describes how
the lifting can be adapted for disjunction (\ref{disj_2}). The
difference with the lifting performed using Proposition
\ref{prop:trivial} is that here the disjunction can be shifted by
any integer in the direction of $x_\ell$ but only by a positive
integer in the direction of $x_i$. See \cite{ccz09} for a proof that
this lifting is the unique minimal lifting.
\begin{prop}
\label{prop:str_dsj}
For $j\in I$, let $m^j \in \mathbb{Z}^2$ with $m^j_1 \ge 0$. The disjunction
\begin{equation}
\label{disj_3}
\left( \eta \left( x_i + \sum_{j \in I} m^1_j s_j\right) + x_\ell  - \sum_{j \in I} m^j_2 s_j \ge 1 \right)
\hspace{0.5cm} \vee \hspace{0.5cm}
\left( \mu \left(x_i + \sum_{j \in I} m^1_j s_j \right) -
x_\ell + \sum_{j \in I} m^j_2 s_j \ge 0 \right)
\end{equation}
is satisfied by all points $((x_i, x_\ell), s) \in \mathbb Z^2 \times
\mathbb R^n$ with $x_i \ge 0$ and $s_j \in \mathbb Z_+$ $\forall j \in I$.
\end{prop}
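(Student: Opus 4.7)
The plan is to verify the proposition by a simple substitution argument: show that the shifted pair $(x'_i, x'_\ell) := \bigl(x_i + \sum_{j\in I} m^j_1 s_j,\; x_\ell - \sum_{j\in I} m^j_2 s_j\bigr)$ satisfies the hypotheses of the original disjunction (\ref{disj_2}), and then appeal to the validity of (\ref{disj_2}) to obtain (\ref{disj_3}).

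The first step is to note that $(x'_i, x'_\ell) \in \mathbb{Z}^2$ whenever $(x_i, x_\ell) \in \mathbb{Z}^2$, because $m^j_1, m^j_2 \in \mathbb{Z}$ and the coefficients $s_j$ that multiply them are integer-constrained (the indices $j$ appearing in the sums lie in $I$, by hypothesis, and $s_j \in \mathbb{Z}_+$ for $j \in I$). The second step is to observe that $x'_i \geq 0$: indeed, $x_i \geq 0$ by assumption, $m^j_1 \geq 0$ by hypothesis, and $s_j \geq 0$ for $j \in I$, so each term added to $x_i$ is nonnegative.

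The third step is to invoke disjunction (\ref{disj_2}), which asserts that every integer point with nonnegative first coordinate satisfies $\eta x_i + x_\ell \geq 1$ or $\mu x_i - x_\ell \geq 0$ (this is the geometric content that the wedge obtained by removing the vertical side of the canonical Type 1 or Type 2 triangle contains no integer point with $x_i \geq 0$ in its interior, as made explicit in Section~\ref{SEC:no_side}). Applying it to $(x'_i, x'_\ell)$, we conclude that either $\eta x'_i + x'_\ell \geq 1$ or $\mu x'_i - x'_\ell \geq 0$. Expanding the definitions of $x'_i$ and $x'_\ell$ gives exactly disjunction (\ref{disj_3}).

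There is no real obstacle; the only point to watch is that the shift must preserve both integrality and nonnegativity of $x'_i$, which is precisely why the lifting is restricted to $j \in I$ (integrality of $s_j$ is needed) and to $m^j_1 \geq 0$ (preservation of the sign constraint on $x_i$, which has no analogue on $x_\ell$, hence the absence of any sign restriction on $m^j_2$).
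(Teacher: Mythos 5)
Your proof is correct and takes essentially the same route as the paper's: both check that the shifted point $(x'_i,x'_\ell) = \bigl(x_i + \sum_{j\in I} m^j_1 s_j,\; x_\ell - \sum_{j\in I} m^j_2 s_j\bigr)$ remains integral with $x'_i \ge 0$ and then appeal to the validity of disjunction (\ref{disj_2}); the paper merely phrases this as a contrapositive. Your version is, if anything, slightly more explicit about where the hypotheses $s_j \ge 0$ and $m^j_1 \ge 0$ enter.
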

\begin{proof}
Suppose that $((x_i, x_\ell), s) \in \mathbb Z^2 \times \mathbb R^n$
with $x_i \ge 0$ and $s_j \in \mathbb Z$ $\forall j \in I$
does not satisfy (\ref{disj_3}). Let $x'_i= x_i + \sum_{j \in I} m^j_1 s_j$
and $x'_\ell = x_\ell - \sum_{j \in I} m^j_2 s_j$. Then
$x'_i \in \mathbb Z_+$ and $x'_\ell \in \mathbb Z$ but
$(x'_i, x'_\ell)$ does not satisfy (\ref{disj_2}).
\end{proof}
Using this proposition, we can derive a formula for a strengthened cut
which uses the integrality of the variables $s_j$ for $j \in I$.
For $m_1 \in \mathbb Z_+$ and $m_2 \in \mathbb Z$, we define
\begin{displaymath}
\Psi^j(m_1, m_2) = \max \left\{
\frac{\eta \left( r^j_{i} + m_1\right) + r^j_{\ell} - m_2}{1 - f_\ell - \eta f_i},
\frac{\mu \left( r^j_{i} + m_1 \right) + m_2 - r^j_{\ell}}{f_\ell - \mu f_i}
\right\}.
\end{displaymath}

The following cut is valid by application of Proposition
\ref{prop:str_dsj}:
\begin{equation}
\sum\limits_{j \in I} \min\limits_{(m_1,m_2) \in \mathbb Z_+ \times \mathbb Z} \Psi^j(m_1, m_2) s_j
+
\sum\limits_{j \in J \setminus I} \max \left\{
\frac{\eta r^j_{i} + r^j_{\ell}}{1 - f_\ell - \eta f_i},
\frac{\mu r^j_{i} - r^j_{\ell}}{f_\ell - \mu f_i}
\right\} s_j \ge 1
\end{equation}

The next proposition gives a closed form formula for computing the coefficients of this strengthened cut.
\begin{prop}\label{closed-form}
$$
\min\limits_{(m_1,m_2) \in \mathbb Z_+ \times \mathbb Z} \Psi^j(m_1, m_2) = \min \left\{
\frac{\eta r^j_{i} + r^j_{\ell} - \lfloor \overline m^j_2 \rfloor }{1 - f_\ell - \eta f_i},
\frac{\mu r^j_{i} + \lceil \overline m^j_2 \rceil - r^j_{\ell}}{f_\ell - \mu f_i}
\right\}
$$
with $\overline m^j_2 = r^j_{\ell} + \frac{r^j_{i}\left( - \mu + \left(\mu + \eta \right) f_\ell \right)}{1 - \left(\mu + \eta \right) f_i}$
\end{prop}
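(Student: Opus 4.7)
The plan is to reduce the two-variable integer optimization to a one-variable problem by first showing $m_1 = 0$ is optimal, and then handling the minimization over $m_2 \in \mathbb{Z}$ using convexity of the max of two affine functions.

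First I would record two sign facts that are used throughout. The denominators $1 - f_\ell - \eta f_i$ and $f_\ell - \mu f_i$ are both strictly positive, since they are (up to the sign of the defining inequalities) the values by which $f$ violates the two inequalities in disjunction~(\ref{disj_2}), and $f$ lies in the interior of the wedge. In canonical form $\eta \ge 0$ and $\mu > 0$, so both numerators in $\Psi^j(m_1,m_2)$ are nondecreasing in $m_1$. Consequently $\Psi^j(m_1,m_2) \ge \Psi^j(0,m_2)$ for every $m_1 \in \mathbb{Z}_+$, and the minimum over $\mathbb{Z}_+ \times \mathbb{Z}$ equals $\min_{m_2 \in \mathbb{Z}} \Psi^j(0,m_2)$.

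Next I would analyze the function $m_2 \mapsto \Psi^j(0,m_2)$. Write $\Psi^j(0,m_2) = \max\{A(m_2), B(m_2)\}$, where
\[
A(m_2) = \frac{\eta r^j_i + r^j_\ell - m_2}{1 - f_\ell - \eta f_i}, \qquad B(m_2) = \frac{\mu r^j_i + m_2 - r^j_\ell}{f_\ell - \mu f_i}.
\]
By the sign facts above, $A$ is affine and strictly decreasing while $B$ is affine and strictly increasing; hence their max is a convex piecewise-linear function with a unique minimizer over $\mathbb{R}$, at the point $\overline m^j_2$ where $A(\overline m^j_2) = B(\overline m^j_2)$. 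Solving this linear equation, and using the identity $(1 - f_\ell - \eta f_i) + (f_\ell - \mu f_i) = 1 - (\eta+\mu) f_i$, gives exactly the expression for $\overline m^j_2$ stated in the proposition.

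Finally, since $\Psi^j(0,\cdot)$ is convex with unique real minimizer $\overline m^j_2$, its minimum over $\mathbb{Z}$ is attained at $\lfloor \overline m^j_2 \rfloor$ or $\lceil \overline m^j_2 \rceil$. At $\lfloor \overline m^j_2 \rfloor$ we lie to the left of the crossing, where $A \ge B$, so $\Psi^j(0,\lfloor \overline m^j_2 \rfloor) = A(\lfloor \overline m^j_2 \rfloor)$; symmetrically $\Psi^j(0,\lceil \overline m^j_2 \rceil) = B(\lceil \overline m^j_2 \rceil)$. Taking the smaller of these two values yields the claimed formula. The only nonroutine step is the crossing-point computation in paragraph two; the main conceptual point, which is what I would emphasize, is the monotonicity argument eliminating $m_1$—without the asymmetric constraint $m_1 \ge 0$ coming from the nonnegativity of $x_i$, this reduction would fail and one would recover the three-case structure of Proposition~\ref{prop:int-lift-closed-form}.
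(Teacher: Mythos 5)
Your proof is correct and follows essentially the same route as the paper's: eliminate $m_1$ by monotonicity (using $m_1 \ge 0$, $\eta,\mu \ge 0$, and the positivity of the denominators), then minimize the convex piecewise-linear function $\Psi^j(0,\cdot)$ over $\mathbb{Z}$ at the floor or ceiling of the crossing point $\overline m^j_2$. You spell out two details the paper leaves implicit --- the verification that $\overline m^j_2$ solves $A(m_2)=B(m_2)$ and the identification of which affine piece is active at $\lfloor \overline m^j_2 \rfloor$ versus $\lceil \overline m^j_2 \rceil$ --- but the underlying argument is the same.
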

\begin{proof}
Since $\eta$ and $\mu$ are positive and $m_1 \ge 0$
\begin{align*}
\frac{\eta\left( r^j_{i} - m_1 \right) + r^j_{\ell} - m_2}{1 - \eta f_i - f_\ell}
\ge \frac{\eta r^j_{i} + r^j_{\ell} - m_2}{1 - f_\ell - \eta f_i}\\
\frac{\mu \left(r^j_{i} - m_1 \right) - r^j_{\ell} + m_2}{f_\ell - \mu f_i} \ge \frac{\mu r^j_{i} - r^j_{\ell} + m_2}{f_\ell - \mu f_i}\\
\end{align*}
and therefore
$$
\min\limits_{(m_1,m_2) \in \mathbb Z_+ \times \mathbb Z} \Psi^j(m_1, m_2) = \min\limits_{m_2 \in \mathbb Z} \Psi^j(0, m_2).
$$
The minimum is attained either in
$\Psi^j(0, \lfloor \overline m^j_2 \rfloor)$ or
$\Psi^j(0, \lceil \overline m^j_2 \rceil)$.
\end{proof}

Note that the cuts generated from the liftings of
Propositions~\ref{prop:int-lift-closed-form} and \ref{closed-form}
are incomparable, i.e. neither is guaranteed to dominate the other.
They will be compared in Section \ref{sec:compu}.

\section{Computational tests}
\label{sec:compu}

We now present computational results obtained by applying the
separation procedure devised in Section \ref{sec:type2}. The cut
generator is implemented in C++ using the COIN-OR framework
\cite{COINOR} with Cbc-2.2.2 and using Clp-1.8.2 for solving linear
programs. The machine used for all the experiments in this section
is a 64 bit {\tt Monarch Empro 4-Way Tower Server} with four {\tt
AMD Opteron 852 2.6GHz} processors, each with eight {\tt DDR-400}
SDRAM of 2 {\tt GB} and running {\tt Linux Fedora 11}. The compiler
is {\tt gcc version 4.4.0 20090506 (Red Hat 4.4.0-4)}. Results are
obtained using only one processor. Test instances for our
experiments are the sixty-eight {\tt MIPLIB3\_C\_V2}
instances~\cite{FMweb}. These instances are slight modifications of
the standard {\tt MIPLIB3} \cite{MIPLIB3} instances for which the validity
of a provided
feasible solution can be checked in finite precision arithmetic.

The main motivation behind these tests is to determine if triangle
cuts are substantially different from traditional Gomory Mixed
Integer (GMI) cuts, and if using them in conjunction with GMI cuts
might be useful. In other words, we are not suggesting to replace
GMI cuts by triangle cuts. Rather, we want to investigate if
triangle cuts can improve performance when used in addition to GMI
cuts.

\old{
\subsection{Depth comparison}
\label{SEC:depth}

As mentioned in Section~\ref{sec:type2}, if the optimal simplex
tableau has $m$ basic integer variables at integer values then for
each fractional basic variable the algorithm of
Section~\ref{sec:type2} generates $2m$ different cuts. Moreover, for
some of these triangles, we could obtain another cut by removing the
vertical side as explained in Section~\ref{SEC:no_side}. We compare
the deepest of all these possible triangle cuts corresponding to a
fractional row with the GMI cut associated with that row.

In the remaining 52 instances, the average depth of the two-row cuts
is more than the average depth of the GMI cuts (averaged over all
the instances). Table \ref{TAB:depthrm} gives more information. For
a particular instance, we measure the depth of the deepest triangle
cut as a percentage of the depth of the GMI cut from the same
fractional row. We then average this percentage over all fractional
rows to get a single percentage number associated with that
instance. We list below the number of instances associated with
ranges on this percentage. For example, in the first table the
column corresponding to 100-125 says that, for 44 instances, the
average percentage of the depth of the triangle cut is between 100
and 125 percent of the GMI cut. The particular breakpoints are
chosen so that the ranges above and below 100\% are symmetric with
respect to the ratios corresponding to these breakpoints. More
explicitly, the breakpoints for the ranges are : $0$, $\frac{1}{2}$,
$\frac{2}{3}$, $\frac{4}{5}$, $1$, $\frac{5}{4}$, $\frac{3}{2}$,
$2$.

\medskip
\begin{table}[htb]\caption{Table for depth results with cut of Proposition \ref{closed-form}}
\centering
\begin{tabular}{|c|c|c|c|c|c|c|c|}
\hline 0-50 & 50-66 & 66-80 & 80-100 & 100-125 & 125-150 & 150-200 &
200+
\\
\hline 0 & 0 & 0 & 2 & 44 & 5 & 1 & 0 \\
\hline
\end{tabular}\label{TAB:depthrm}
\end{table}

The two instances in the range 80-100 have average depth percentages
99.79\% and 99.86\%. This shows that, in general, the deepest
triangle cuts are deeper than the corresponding GMI cuts. }

\subsection{Results on gap closed by our cuts}\label{sec:G}

We compare the following four generators. As described in
Section~\ref{sec:type2}, we partition the rows of the optimal
simplex tableau corresponding to the basic integer variables into
{\it fractional rows} and {\it integer rows}.

\begin{itemize}
\item {\tt G} : A cut generator which generates one GMI cut for each fractional
row.
\item {\tt G-2rounds} : A cut generator which applies {\tt G} to generate a first round of GMI cuts, reoptimizes
the resulting LP and generates a second round of GMI cuts from the new optimal basic solution.
\item {\tt G+Allpairs} : A cut generator which generates one GMI cut for each fractional
row and all cuts derived from every pair
of fractional and integer rows as explained in
Section~\ref{sec:type2}. We also add cuts derived by removing the
vertical side of the triangle whenever possible, as outlined in
Section~\ref{SEC:no_side}.
\item {\tt G+Deepest} : {\tt G+Allpairs} often generates a large number of
cuts. To reduce this number, we only add the GMI cuts and the deepest among all
possible triangle cuts for each fractional row.
Note that {\tt G} generates one cut
per fractional row, whereas {\tt G+Deepest} generates two.
\end{itemize}

Since computers work in finite precision, cut generators sometimes
generate invalid cuts. To limit these occurrences, a variety of
tolerances and safeguards are typically used. For GMI cuts,
the most important ones are a lower bound on {\em integer
infeasibility} and an upper bound on {\em dynamism}. For a solution $\bar x$,
define the {\em integer infeasibility} of an integer variable $x_i$
as $\min\{\bar x_i - \lfloor\bar x_i\rfloor , \lceil\bar x_i\rceil -
\bar x_i\}$. We do not generate a cut from a fractional row whose basic
integer variable has an integer infeasibility smaller than 0.01. Define the
{\em dynamism} of a cut as  the ratio between the largest and the
smallest nonzero absolute values of its left hand side coefficients.
We discard generated cuts whose dynamism is larger than $10^9$.
We use the same parameters for the generation of two-row cuts with,
in addition, the condition that, for a row to be considered an
integer row, the integer infeasibility of its basic integer variable must
be at most $10^{-5}$.

A method for testing the accuracy and strength of cut generators is
given in \cite{margot}. Next we give a short description of the
method. Please refer to the original paper for additional details.

Consider an instance $I$ with a known feasible solution $x^*$. Define
a {\it cutting step} as the operation of adding cuts obtained from a
cut generator and checking if $x^*$ is still feasible (report
failure of the cut generator if $x^*$ is no longer feasible). Define
a {\it branching step} as getting an optimal solution $\bar x$ of
the current LP relaxation, picking at random an integer variable
$x_i$ with a fractional value $\bar x_i$ and imposing in the LP that
its value must be $x^*_i$.

The method, called a \emph{dive towards a feasible solution $x^*$},
amounts to repeatedly performing a cutting step followed by a
branching
step until an integer feasible solution of the LP relaxation is
obtained. In our experiments $x^*$ is chosen to be a known feasible solution whose objective value is likely to be optimum or close to optimum. A number of dives (we use twenty dives in our experiments)
are performed for each instance $I$. In our experiments, we set a time limit of 3 hours for each dive.

The motivation for using cuts in a branch-and-cut algorithm is to
improve the bound obtained from the linear relaxation and
to reduce the size of the enumeration tree.
With these two goals in mind, we track
the following during each dive:

\begin{itemize}
\item Fraction of the integrality gap closed after some given
numbers of branching steps;
\item Number of branching steps required to close some given
fractions of the integrality gap.
\end{itemize}

The random branching step allows for meaningful statistical analysis
of these two performance measures. We used the nonparametric {\it
Quade} test as described in \cite{Conover}. All tests are done with
a confidence level of 95\%. The statistical package {\tt R} \cite{R} version
2.10.0 (2009-10-26) is used for the statistical analysis of the
results.

\old{Our goal was to determine if smaller number of branching steps
are needed when we use our triangle cuts along with the GMI cuts,
when compared to the stand alone GMI cuts, to close the same
fraction of the integrality gap. The results are summarized in
Tables~\ref{TAB:diving}. }

In Table~\ref{TAB:average}, we give the average gap closed by the
four algorithms {\tt  G, G-2Rounds, G+Allpairs, G+Deepest} after 0,
4, 8 and 12 branching steps in a dive. The following problems timed
out without any results for at least one generator: 10teams\_c,
air04\_c, air05\_c, arki\_c, dano3mip\_c, fast0507\_c, mitre\_c,
mkc\_c, seymour\_c. The table gives results for the remaining 59
instances.

\old{
\begin{table}[htb]\caption{Statistical winner based on number of branching steps to close different fractions of the integrality gap}
\centering
\begin{tabular}{|c|c|c|c|c|c|}
\hline 0.5 & 0.6 & 0.7 & 0.8 & 0.9 & 1.0
\\
\hline {\tt GT} is better & {\tt GT} is better & {\tt GT} is better & {\tt GT} is better & {\tt GT} is better & {\tt G} is better  \\
\hline
\end{tabular}\label{TAB:diving}
\end{table}
}
\begin{table}[htb]\caption{Average percentage of gap closed at different depths}
\centering
\begin{tabular}{|c|c|c|c|c|}
\hline & Depth 0 (root node) & Depth 4 & Depth 8 & Depth 12
\\
\hline {\tt G} &          28.33 & 57.41 & 68.15 & 75.23 \\
\hline {\tt G+Allpairs} & 29.11 & 58.13 & 68.43 & 74.81 \\
\hline {\tt G+Deepest}  & 28.80 & 58.09 & 68.59 & 75.40 \\
\hline {\tt G-2Rounds}  & 36.66 & 59.41 & 68.75 & 75.47 \\
\hline
\end{tabular}\label{TAB:average}
\end{table}

\iffalse
{\bf FM: I put below the table for instances that are degenerate at the root;
the results look slightly better; maybe we should add a comment (but not the
table, of course). G+Deepest is significantly better than G at all depth.
}

\begin{table}[htb]\caption{Average percentage of gap closed at different depths
(instances with triangle cuts at the root)}
\centering
\begin{tabular}{|c|c|c|c|c|}
\hline & Depth 0 (root node) & Depth 4 & Depth 8 & Depth 12
\\
\hline {\tt G} &          33.14 & 67.56 & 78.62 & 85.42 \\
\hline {\tt G+Allpairs} & 34.32 & 68.49 & 78.69 & 84.91 \\
\hline {\tt G+Deepest}  & 33.92 & 68.56 & 79.39 & 85.75 \\
\hline {\tt G-2Rounds}  & 43.88 & 69.55 & 78.94 & 85.51 \\
\hline
\end{tabular}\label{TAB:average}
\end{table}
\fi

We make the following observations from our results.
\begin{itemize}
\item Comparing {\tt G} and {\tt G+Allpairs}, we note that triangle cuts
only improve marginally over the GMI cuts at depths 0, 4 and 8. The
improvement in average gap closed at these three depths is less than
1\%. At depth 12, the average gap closed is actually more for {\tt
G}, but this difference is not statistically significant according
to the Quade test.

Even restricting to instances that are degenerate at the
root, the improvement is only 1.2\% on average at the root which is
disappointingly small given that we are generating at least four
times as many cuts.

\item Comparing {\tt G+Allpairs} and {\tt G+Deepest}, we note that most
of the above improvement at depths 0, 4 and 8 can be achieved using
only the deepest triangles. In fact, surprisingly, at depths 8 and
12 {\tt G+Deepest} closes more average gap than {\tt G+Allpairs}, and at
depth 12 the difference is statistically significant. This may
be due to the fact that adding too many cuts deteriorates the
numerical properties of the basis, and this has an adverse effect on the quality
of the cuts at greater depths.
%The improvement of {\tt G+Deeepest} over {\tt G} is about 0.5\% at depths 0, 4 and 8. This difference is statistically significant.

\item Comparing {\tt G-2Rounds} and {\tt G+Deepest}, which generate roughly
the same number of cuts, we conclude that it is
clearly better to use GMI cuts at levels 0 and 4. This is confirmed
by the Quade test which shows that the difference is statistically
significant. At depths 8 and 12 the difference is not
statistically significant.
\end{itemize}

The results from Table~\ref{TAB:average} seem to indicate that our
family of two-row cuts is not competitive with GMI cuts in terms of gap closed.
This conclusion is confirmed by statistical tests performed on the
second performance measure introduced above. For brevity, the detailed results of these tests are not reported here.

\iffalse
{\bf FM: Isn't this a little bit harsh? They are competitive, but do not
improve much is a fairer assessment. If the ``not competitive'' refers to
number of cuts or time to generate them, this should be explicited.}
\fi

We performed other experiments which seem to suggest$  $ that among the
two-row cuts that we generate, the split cuts are the more useful
ones. First, we considered two variants of {\tt G+Deepest} where the
choice of the cut is based on the length of the segment $p^2p^3$ as
defined in Section~\ref{sec:type2}. In the first variant, we chose
the cut with the longest segment $p^2p^3$; whereas, in the second we
chose the shortest. The motivation for the first choice comes from
the theoretical results in~\cite{BBCM}, where it is shown that the
gap closed by the split closure is smaller when the segment $p^2p^3$
is long. However, surprisingly, the second variant closes more gap
on average at levels 0, 4, 8 and 12, although the differences are
small. This may be explained from the fact that the second variant
generates more splits than the first and, in fact, at the root it
generates almost only splits.

In another experiment, we tried to see whether our two-row cuts
could cut off the optimum vertex of the split closure, as computed
by Balas and Saxena~\cite{balsax}. For this purpose we used the
formulations in {\tt OSCLIB 1.0} \cite{OSCLIB}.
In all but 2 of these instances, we could not cut off the split optimum.

We also compared the depths of the GMI cuts with our two-row cuts.
For 15 instances, no triangle cuts are generated at the root because the optimal
tableau is not degenerate and there are no basic integer
variables with integer values.
\old{These instances are danoint\_c,
egout\_c, fixnet3\_c, fixnet4\_c, fixnet6\_c, flugpl\_c,
markshare1\_c, markshare2\_c, mas74\_c, mas76\_c, pk1\_c,
rentacar\_c, rgn\_c, stein27\_c, stein45\_c. }For the instance
nw04\_c, the average depths are too small for any meaningful
comparison. For each of the 52 remaining instances, we measure the
depth of the deepest triangle cut as a percentage of the depth of
the GMI cut from the same fractional row. We then average this
percentage over all fractional rows to get a single percentage
number associated with that instance. For 46 out of the 52
instances, this percentage is between 99 and 125. The other 6 have a
percentage between 125 and 200. This shows that, in general, the
deepest triangle cuts are deeper than the corresponding GMI cuts.

\subsection{Importance of the Experimental Setup}

In this section, we show that one can reach drastically different
conclusions by modifying the experimental setup in a seemingly
natural way. Instead of using our own GMI cut generator, we now use
the GMI cut generator {\tt CglGomory} from the COIN-OR library with
default settings. The table corresponding to Table~\ref{TAB:average}
is as follows :

\begin{table}[htb]\caption{Average percentage of gap closed at different depths}
\centering
\begin{tabular}{|c|c|c|c|c|}
\hline & Depth 0 (root node) & Depth 4 & Depth 8 & Depth 12
\\
\hline {\tt CglGomory} & 23.83 & 51.86 & 62.91 & 69.83 \\
\hline {\tt CglGomory+All pairs} & 24.72 & 53.18 & 64.37 & 71.68 \\
\hline {\tt CglGomory+Deepest} & 24.45 & 53.92 & 64.54 & 71.19 \\
\hline {\tt CglGomory-2Rounds} & 30.89 & 53.73 & 62.85 & 69.97 \\
\hline
\end{tabular}\label{TAB:average2}
\end{table}

The conclusions we reach from this table are different from those
reached from Table~\ref{TAB:average}. We conclude that {\tt
CglGomory+All pairs} is significantly better than {\tt CglGomory} at
all depths and this advantage increases with depth, with the
difference being almost 2\% at depth 12. These differences are
statistically significant according to the Quade test. Moreover,
{\tt CglGomory+Deepest} is superior to {\tt CglGomory-2Rounds} at
depths 4, 8 and 12 with respect to gap closed and this difference is
statistically significant at depths 8 and 12. When comparison is
made on the number of branching steps needed to close different
fractions of the gap, {\tt CglGomory+Deepest} is significantly
better than {\tt CglGomory-2Rounds} for closing 70\% or more of the
gap according to the Quade test.

How do we reconcile the different conclusions reached from
Tables~\ref{TAB:average} and \ref{TAB:average2} ?

Although our goal in this paper is to compare the strength of
different cut families, in practice we can only compare cut
generators. The generator {\tt G} described in the previous section
and {\tt CglGomory} are two different generators for GMI cuts. We
described above the parameter settings for accepting the cuts in
{\tt G}. For the default {\tt CglGomory} generator the bound on
integer infeasibility is $0.05$, the bound on dynamism is $10^9$,
and there are several other parameters that may affect the
acceptance or rejection of a GMI cut. Even if we set the integer
infeasibility and dynamism bounds to the same value in both
generators, there are still significant differences in these two cut
generators. This raises the question of which generator to use for
our experiments and how to set the parameters. Depending on the
parameter settings, a cut generator may reject cuts more often
(conservative strategy) or less frequently (aggressive strategy).

A typical way that researchers test new cut generators is to add
their generator to existing base generators and evaluate the
difference in performance. In such a setting, if the base generator
is conservative and the tested generator is aggressive, one may
expect good results. On the other hand, if the base generator is
made more aggressive or the tested generator is made more
conservative, these improvements might be eroded to a large extent.
The difference between Tables~\ref{TAB:average} and
\ref{TAB:average2} is a concrete example of this phenomenon. This
issue is usually not addressed in the literature. There is a
trade-off between reliability of cut generators and their
aggressiveness. Therefore, a key aspect of such experimental setups
should involve measuring the reliability of cut generators. This was
addressed by Margot in ~\cite{margot}. The general philosophy is to
put the generator under stress and to record the number of failures
due to invalid cut generation. Concretely, the reliability of a cut
generator is tested using the diving experiment described in the
previous section where 10 rounds of cuts are generated before a
branching step is performed. According to this test, {\tt G} is
comparable in reliability to {\tt CglGomory}. In fact, {\tt G} never
generated an invalid cut for the 59 instances of {\tt
MIPLIB3\_C\_V2} considered in this paper; whereas, {\tt CglGomory}
generated invalid cuts on the instance harp2\_c.

In our view, {\tt G} should be used over {\tt CglGomory} in our
experiments because it is more aggressive and equally reliable.
Another advantage of {\tt G} over {\tt CglGomory} is that not only
are the parameter settings the same as those for the two-row cut
generators, the computer code itself is the same.
This does not necessarily imply that {\tt G} should be preferred to
{\tt CglGomory} in a general branch-and-cut setting. More experiments
would be needed to assess the reliability of the two cut generators.

This discussion shows that the setup based on {\tt CglGomory} is
flawed and that the correct setup is the one used in
Section~\ref{sec:G}, if the goal is to evaluate our two-row cuts as
compared to GMI cuts. The conclusion is that our family of two-row
cuts is not competitive with GMI cuts.

\old{
\bigskip

\medskip
\begin{table}[htb]\caption{Table for depth results with cut of Proposition \ref{prop:int-lift-closed-form}}
\centering
\begin{tabular}{|c|c|c|c|c|c|c|c|}
\hline 0-50 & 50-66 & 66-80 & 80-100 & 100-125 & 125-150 & 150-200 &
200+
\\
\hline 3 & 0 & 0 & 6 & 19 & 12 & 6 & 4\\
\hline
\end{tabular}\label{TAB:depthnorm}
\end{table}

While the difference between the two variants are not very strong, a
slight edge for the variant of Proposition
\ref{prop:int-lift-closed-form}. However, both variants produce cuts
that, on average, are deeper than the GMI cuts.}

\appendix
\section{Intersection cuts from multiple rows}
\label{inter_cut}
Proposition \ref{prop:mult_row} gives the general formula for the cut derived from a $p$-term disjunction and $m$ rows of a simplex tableau.
\begin{prop}
\label{prop:mult_row}
Let $C=\{ (x,s) \in \mathbb Z^{m}\times \mathbb R^{k} : x = f + \sum_{j =1}^k r^j s_j, s \geq 0\}$ and
$$
\Pi = \left( \pi^1 x \geq \pi^1_0 \right) \vee \ldots \vee \left(\pi^p x \geq \pi^p_0 \right)
$$
be a disjunction satisfied by all points of $\mathbb Z^m$ and not satisfied by $f$.
The inequality
\begin{equation}
\label{one} \sum\limits_{j = 1}^k \max_{l = 1,\ldots,p} \left\{
\frac{\pi^l r^j}{\pi^l_0 - \pi^l f} \right\} s_j \geq 1
\end{equation}
is valid for $C$.
\end{prop}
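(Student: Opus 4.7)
The plan is to prove validity directly by taking an arbitrary feasible point of $C$, locating which term of the disjunction it must satisfy, and then combining that single inequality with the nonnegativity of the $s_j$'s. The key observation is that for every $(x,s) \in C$ the integrality $x \in \mathbb{Z}^m$ forces at least one index $l \in \{1,\ldots,p\}$ with $\pi^l x \geq \pi^l_0$, so we only need to show that the listed cut is implied, term by term, by that one disjunct.

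First I would fix $(x,s) \in C$ and pick such an $l$. Substituting the affine expression $x = f + \sum_{j=1}^k r^j s_j$ into $\pi^l x \geq \pi^l_0$ yields
\begin{equation*}
\sum_{j=1}^k (\pi^l r^j)\, s_j \;\geq\; \pi^l_0 - \pi^l f.
\end{equation*}
Next I would invoke the hypothesis that $f$ violates every term of $\Pi$, which gives $\pi^l f < \pi^l_0$, so the right-hand side $\pi^l_0 - \pi^l f$ is strictly positive. Dividing through by this positive quantity produces the normalized inequality
\begin{equation*}
\sum_{j=1}^k \frac{\pi^l r^j}{\pi^l_0 - \pi^l f}\, s_j \;\geq\; 1.
\end{equation*}

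Finally I would upgrade to the $\max$ form using $s_j \geq 0$: for each $j$,
\begin{equation*}
\max_{l'=1,\ldots,p} \left\{\frac{\pi^{l'} r^j}{\pi^{l'}_0 - \pi^{l'} f}\right\} s_j \;\geq\; \frac{\pi^l r^j}{\pi^l_0 - \pi^l f}\, s_j,
\end{equation*}
and summing over $j$ gives the desired inequality (\ref{one}). Since $(x,s) \in C$ was arbitrary, validity follows.

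There is essentially no hard step here; the only care required is to verify that $\pi^l_0 - \pi^l f > 0$ before dividing (which is exactly where the hypothesis that $f$ does not satisfy the disjunction is used) and to note that the $\max$ over $l'$ can be used in place of the specific $l'=l$ only because $s_j \geq 0$. If any $s_j$ were allowed to be negative, the termwise upper-bounding step would fail and one would need a more delicate argument.
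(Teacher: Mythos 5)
Your proposal is correct and follows essentially the same route as the paper's proof: substitute the tableau expression for $x$, divide each disjunct's inequality by the strictly positive quantity $\pi^l_0 - \pi^l f$, and pass to the componentwise maximum using $s \geq 0$. The only cosmetic difference is that you argue pointwise (fixing a feasible $(x,s)$ and the disjunct it satisfies) while the paper states the same disjunctive-maximum principle for all terms at once.
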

\begin{proof}
For each $l=1,\ldots,p$, $\pi^l x \geq \pi^l_0$ can be rewritten using the definition of $C$ in terms of the $s$ variables only:
\begin{align*}
\pi^l \left( f + \sum\limits_{j = 1}^k r^j s_j \right) \geq \pi^l_0. & & l = 1\ldots,p.
\end{align*}
Reorganizing the terms of the above inequality it can be rewritten as
\begin{align*}
\sum\limits_{j=1}^{k} \pi^l r^j s_j \geq \pi^l_0 - \pi^l f . & & l = 1\ldots,p.
\end{align*}
The point $f$ does not satisfy the disjunction and therefore $\pi^l_0 - \pi^l f > 0$, dividing the inequality by $\pi^l_0 - \pi^l f$ we obtain
\begin{align*}
\sum\limits_{j=1}^{k} \frac{\pi^l r^j}{\pi^l_0 - \pi^l f} s_j \geq 1 & & l = 1\ldots,p.
\end{align*}
Each of the above inequalities is valid for one term of the disjunction. Since they all have the same right-hand-side and $s \ge 0$, the inequality obtained by taking the component-wise maximum is valid for all terms of the disjunction and therefore
$$
\sum\limits_{j = 1}^k \max_{l = 1,\ldots,p} \left\{ \frac{\pi^l r^j}{\pi^l_0 - \pi^l f} \right\} s_j \geq 1
$$
is valid for $C$.
\end{proof}
The inequality is nothing more than the intersection cut obtained from the cone $C$ and the disjunction $\Pi$.\\

\section{Integer lifting by the trivial fill-in function}
\label{trivial_fill_in}
\begin{prop}
\label{prop:trivial} Let $C=\{ (x,s) \in \mathbb Z^{m}\times \mathbb
R^{k} : x = f + \sum_{j =1}^k r^j s_j, s \geq 0\}$. Let $I$ be a
subset of $\{1,\ldots,k\}$, $C^I=C \cap\left\{ (x,s) \in \mathbb
Z^{m} \times \mathbb R^{k} :  s_j \in \mathbb Z,\, \forall j \in I
\right\}$ be a mixed-integer set and
$$
\Pi = \left( \pi^1 x \geq \pi^1_0 \right) \vee \ldots \vee \left(\pi^p x \geq \pi^p_0 \right)
$$
be a disjunction satisfied by all $x \in \mathbb Z^m$. For $j \in I$, let $m^j$ be an integral vector of dimension $m$.
The disjunction
$$
\Pi' =
\left( \pi^1 \left( x - \sum_{j \in I} m^j s_j \right) \geq \pi^1_0 \right) \vee \ldots
\vee \left( \pi^p \left( x - \sum_{j \in I} m^j s_j \right) \geq \pi^p_0 \right)
$$
is valid for $C^I$.
\end{prop}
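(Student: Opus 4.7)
The plan is a direct substitution argument: for any feasible point $(x,s) \in C^I$, I will show that the shifted vector $x' := x - \sum_{j \in I} m^j s_j$ lies in $\mathbb{Z}^m$, and then invoke the validity of $\Pi$ on integer points to conclude that $(x,s)$ satisfies $\Pi'$.

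First I would fix an arbitrary $(x,s) \in C^I$. By definition of $C^I$, we have $x \in \mathbb{Z}^m$ and $s_j \in \mathbb{Z}$ for every $j \in I$. Since each $m^j$ is by hypothesis an integral vector in $\mathbb{Z}^m$ and each $s_j$ with $j \in I$ is an integer, each term $m^j s_j$ is integral and so is their sum. Subtracting this from $x \in \mathbb{Z}^m$ yields $x' \in \mathbb{Z}^m$.

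Next I would apply the hypothesis that $\Pi$ is satisfied by every point of $\mathbb{Z}^m$ to the vector $x'$. This produces an index $l \in \{1,\ldots,p\}$ such that $\pi^l x' \geq \pi^l_0$, which is precisely the inequality
\begin{equation*}
\pi^l \Bigl( x - \sum_{j \in I} m^j s_j \Bigr) \geq \pi^l_0.
\end{equation*}
This is the $l$-th term of $\Pi'$, so $(x,s)$ satisfies $\Pi'$, as required.

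There is no real obstacle here; the proof is essentially a one-line observation. The only subtlety worth flagging is why the sum is restricted to $j \in I$ rather than to all $j \in \{1,\ldots,k\}$: without integrality of $s_j$ one cannot guarantee that $m^j s_j$ is integral, and then $x'$ might fail to be in $\mathbb{Z}^m$, so the hypothesis on $\Pi$ could not be applied. The variables $s_j$ with $j \notin I$ do not appear in the shift and therefore need no assumption. This reflects the intuition behind trivial lifting: the disjunction may be translated by any integer-affine correction whose continuous coefficients come from integer non-basic variables only.
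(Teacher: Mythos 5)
Your proof is correct and takes essentially the same approach as the paper: the key step in both is that $x' = x - \sum_{j \in I} m^j s_j$ remains in $\mathbb{Z}^m$ because $m^j$ and $s_j$ ($j \in I$) are integral, after which validity of $\Pi$ on integer points finishes the argument. The only cosmetic difference is that the paper argues by contradiction while you argue directly.
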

\begin{proof}
Suppose that the disjunction $\Pi'$ is not valid. Then there exist
$(x,s) \in \mathbb Z^m \times \mathbb R^k$ with $s_j \in \mathbb Z,
\, \forall j \in I$, such that:
\begin{align*}
\pi^l (x - \sum_{j \in I} m^j s_j ) < \pi^l_0 & & l=1,\ldots,p.
\end{align*}
Let $x' = x - \sum_{j \in I} m^j s_j$. $x' \in \mathbb Z^m$ since $x \in \mathbb Z^m$, $m^j \in \mathbb Z^m$ and $s_j \in \mathbb Z$ for all $j \in I$. Then $x'$ violates disjunction $\Pi$ which contradicts the fact that $\Pi$ is a valid disjunction.
\end{proof}

Now we consider the intersection cut obtained from a disjunction of
the form $\Pi'$ stated in Proposition~\ref{prop:trivial}.

\begin{prop}
\label{prop:mult_row_lift} Let $C=\{ (x,s) \in \mathbb Z^{m}\times
\mathbb R^{k} : x = f + \sum_{j =1}^k r^j s_j, s \geq 0\}$. Let
$J=\{1,\ldots,k\}$ and $I$ be a subset of $J$, $C^I=C \cap\left\{
(x,s) \in \mathbb Z^{m} \times \mathbb R^{k} :  s_j \in \mathbb Z,\,
\forall j \in I \right\}$ be a mixed-integer set and
$$
\Pi = (\pi^1 x \geq \pi^1_0) \vee \ldots \vee (\pi^p x \geq \pi^p_0)
$$
be a disjunction satisfied by all $x \in \mathbb Z^m$ and not satisfied by the point $f$.
For all $m^j \in \mathbb Z^m$, $j=1,\ldots,m$ the inequality
\begin{equation}
\label{two}
\sum\limits_{j \in I} \max_{l = 1,\ldots,p} \left\{ \frac{\pi^l (r^j - m^j)}{\pi^l_0 - \pi^l f} \right\} s_j +
\sum\limits_{j \in J \setminus I} \max_{l = 1,\ldots,p} \left\{ \frac{\pi^l r^j}{\pi^l_0 - \pi^l f} \right\} s_j\geq 1
\end{equation}
is valid for $C^I$.
\end{prop}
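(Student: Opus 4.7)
The plan is to combine Proposition~\ref{prop:trivial} (validity of the lifted disjunction $\Pi'$) with the standard intersection-cut computation used in the proof of Proposition~\ref{prop:mult_row}. Proposition~\ref{prop:mult_row} itself cannot be quoted as a black box because $\Pi'$ is a disjunction on $(x,s)$ rather than purely on $x$, so the derivation has to be redone with the extra $s$-terms carried along; but essentially the same chain of manipulations goes through.

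First, by Proposition~\ref{prop:trivial} every $(x,s)\in C^I$ satisfies $\Pi'$, so to prove (\ref{two}) it suffices to show that (\ref{two}) is implied by the relaxation $C\cap \Pi'$ (that is, valid for every $(x,s)\in C$ satisfying $\Pi'$). Fix an arbitrary $(x,s)\in C$ satisfying at least one term, say term $l$, of $\Pi'$:
\begin{equation*}
\pi^l\Bigl(x - \sum_{j\in I} m^j s_j\Bigr) \ge \pi^l_0.
\end{equation*}
Substituting $x = f + \sum_{j\in J} r^j s_j$ and regrouping yields
\begin{equation*}
\sum_{j\in I} \pi^l(r^j - m^j) s_j + \sum_{j\in J\setminus I} \pi^l r^j\, s_j \;\ge\; \pi^l_0 - \pi^l f.
\end{equation*}

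Second, since $f$ does not satisfy $\Pi$, we have $\pi^l_0 - \pi^l f > 0$ for every $l=1,\ldots,p$, so we may divide by this quantity and obtain, for the particular $l$ that was satisfied,
\begin{equation*}
\sum_{j\in I} \frac{\pi^l(r^j - m^j)}{\pi^l_0 - \pi^l f} s_j + \sum_{j\in J\setminus I} \frac{\pi^l r^j}{\pi^l_0 - \pi^l f} s_j \;\ge\; 1.
\end{equation*}
Because $s\ge 0$, replacing each coefficient by the maximum over $l=1,\ldots,p$ can only enlarge the left-hand side, so the inequality (\ref{two}) holds.

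Since the above argument applies to every $(x,s)\in C$ satisfying some term of $\Pi'$, and by Proposition~\ref{prop:trivial} every $(x,s)\in C^I$ is such a point, (\ref{two}) is valid for $C^I$. The only mildly delicate step is the sign condition $\pi^l_0 - \pi^l f > 0$ used in the division; this is exactly what lets us pass from the inequality associated with the selected disjunctive term to a normalized form with right-hand side $1$, and it is guaranteed by the hypothesis that $f$ does not satisfy $\Pi$. Everything else is bookkeeping identical to the proof of Proposition~\ref{prop:mult_row}, with $r^j$ replaced by $r^j - m^j$ for $j\in I$.
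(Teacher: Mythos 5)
Your proof is correct and follows essentially the same route as the paper's: invoke Proposition~\ref{prop:trivial} for the validity of $\Pi'$, rewrite each term in the $s$-variables, divide by $\pi^l_0 - \pi^l f > 0$, and take the coefficientwise maximum using $s \ge 0$. Your version is, if anything, slightly more careful than the paper's in spelling out why the disjunctive-principle step works and why the sign condition is needed.
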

\begin{proof}
Since $\Pi$ is valid, for all $m^j \in \mathbb Z^m$, $j=1,\ldots,m$ the disjunction
$$
\Pi' =
(\pi^1 (x - \sum_{j \in I} m^j s_j ) \geq \pi^1_0) \vee \ldots
\vee (\pi^k (x - \sum_{j \in I} m^j s_j ) \geq \pi^p_0)
$$
is valid.

The terms of the disjunction can be rewritten as:
\begin{align*}
\sum\limits_{j \in I} \pi^l (r^j - m^j) s_j +
\sum\limits_{j \in J \setminus I} \pi^l r^j s_j\geq \pi^l_0 - \pi^l f. & & l = 1\ldots,p.
\end{align*}
The point $f$ does not satisfy the disjunction and therefore $\pi^l_0 - \pi^l f > 0$, dividing the inequality by $\pi^l_0 - \pi^l f $ we obtain
\begin{align*}
\sum\limits_{j \in I} \frac{\pi^l (r^j - m^j)}{\pi^l_0 - \pi^l f } s_j +
\sum\limits_{j \in J \setminus I} \frac{\pi^l r^j}{\pi^l_0 - \pi^l f} s_j \geq 1 & & l = 1\ldots,p.
\end{align*}
Each of the above inequalities is valid for one term of the disjunction. By applying, the disjunctive principle we obtain (\ref{two}).
\end{proof}

%\section{Detailed tables of computational results}
\iffalse
\input{tab_mig_comp}
\input{tab_appendix}
\fi
\end{document}